\documentclass[11pt,a4paper]{amsart}
\usepackage[T1]{fontenc}
\usepackage[latin9]{inputenc}
\setlength{\parskip}{\medskipamount}
\setlength{\parindent}{0pt}
\usepackage{amssymb}
\usepackage{color}
\usepackage{graphicx,color}
\usepackage{amsmath, amssymb, graphics}

\makeatletter
%%%%%%%%%%%%%%%%%%%%%%%%%%%%%% Textclass specific LaTeX commands.
\numberwithin{equation}{section} %% Comment out for sequentially-numbered
\numberwithin{figure}{section} %% Comment out for sequentially-numbered
  \@ifundefined{theoremstyle}{\usepackage{amsthm}}{}
  \theoremstyle{plain}
  \newtheorem{thm}{Theorem}[section]
  \theoremstyle{plain}
  
  \theoremstyle{plain}
  
  \theoremstyle{remark}
  
  \theoremstyle{remark}
  
  \theoremstyle{plain}
  \newtheorem{lem}[thm]{Lemma}
  \newtheorem{defn}[thm]{Definition}

\setlength{\textwidth}{14cm}
\setlength{\oddsidemargin}{1cm}
\setlength{\evensidemargin}{1cm}
\setlength{\textheight}{19cm}
\setlength{\parskip}{2mm}
\setlength{\parindent}{0em}
\setlength{\headsep}{1.5cm}

%%%%%%%%%%%%%%%%%%%%%%%%%%%%%% User specified LaTeX commands.

\def\com#1{ \hbox{#1}}

\smallskip
\def\<{{\langle }}
\def\>{{\rangle }}

\def\com#1{ \quad\hbox{#1}\quad}

\smallskip
\def\<{{\langle }}
\def\>{{\rangle }}

%%%%%%%%%%%%%%%%%%%%%%%%%%%%%% User specified LaTeX commands.

%important file: Proof of the existence of L4 for the Earth Sun system
%important file: Following the May 2016 version of the paper

\begin{document}

\title[Periods of the Circular RTBP]{On the period of the periodic orbits of the restricted three body problem}
\author{Oscar Perdomo}
%\affil{CCSU}
\date{\today}

\curraddr{Department of Mathematics\\
Central Connecticut State University\\
New Britain, CT 06050\\}
\email{ perdomoosm@ccsu.edu}

%\subjclass[2000]{53C42, 53A10}
%\maketitle

\begin{abstract}
We will show that the period  $T$ of a closed orbit of the planar circular restricted three body problem (viewed on rotating coordinates) depends on the region it encloses. Roughly speaking, we show that, $2 T=k\pi+\int_\Omega g$ where $k$ is an integer, $\Omega$ is the region enclosed by the periodic orbit and $g:\mathbb{R}^2\to \mathbb{R}$ is a function that only depends on the constant $C$ known as the Jacobian integral; it does not depend on $\Omega$. This theorem has a Keplerian flavor in the sense that it relates the period with the space ``swept''  by the orbit. As an application we prove that there is a neighborhood around $L_4$ such that every periodic solution contained in this neighborhood must move clockwise. The same result holds true for $L_5$. 

%We can also use the theorem to prove the non-existence of solutions with a given period or to show the difference between the periods of  two orbits with the same Jacobian energy.

%We show some posible uses of the theorem in the last section of the paper.

%As an application we prove that there is a neighborhood around $L_4$ such that every periodic solution contained in this neighborhood must move clockwise. Same result holds true for $L_5$. We can also use the theorem to prove the non-existence of solutions with a given period or to show the difference between the periods of  two orbits with the same Jacobian energy.

\end{abstract}
%We conclude that after so much rounding done in the previous papers analyzing the stability of the Lagrangian points in the relativistic case, sometimes, their approximations turn out to be worse that the one given by the non relativistic case.
\keywords{Restricted three body problem, periodic solutions, Lagrangian points,  Stokes theorem.}

%\subjclass[2000]{53C42, 53A10}
\maketitle

%%%%%%%%%%%%%%%%%%%%%%5
\section{Preliminaries and Introduction}
This paper deals with the motion of three bodies in the plane where the only force being considered is the gravitational force. 
We will assume that the units have been changed so that the gravitational constant is 1, the mass of the first body  is $1-\mu>0$ and the mass of 
the second body is $\mu\le \frac{1}{2}$. Moreover, we will assume that the first body moves according to the formula $p_1(t)=-\mu(\cos t,\sin t)$ and the second body moves according to the formula $p_2(t)=(1-\mu) (\cos t,\sin t)$. The motion of these two bodies is known as the circular solution of the two body problem. Now, let us assume that the third body has a mass $m$ so small that it does not affect the motion of the other two bodies. It follows that if the third body moves according to the formula $(x_1(t),x_2(t))$, then the function $x_1(t)$ and $x_2(t)$ satisfy the following system of differential equations,

{\tiny
\begin{eqnarray*}
\ddot{x}_1&=&\frac{\mu  \left((1-\mu ) \cos (t)-x_1\right)}{\left(\left((\mu -1) \sin (t)+x_2\right){}^2+\left((\mu -1) \cos (t)+x_1\right){}^2\right){}^{3/2}}+\frac{(1-\mu ) \left(-\mu  \cos (t)-x_1\right)}{\left(\left(\mu  \sin (t)+x_2\right){}^2+\left(\mu  \cos (t)+x_1\right){}^2\right){}^{3/2}}\\
\ddot{x}_2&=&\frac{\mu  \left((1-\mu ) \sin (t)-x_2\right)}{\left(\left((\mu -1) \sin (t)+x_2\right){}^2+\left((\mu -1) \cos (t)+x_1\right){}^2\right){}^{3/2}}+\frac{(1-\mu ) \left(-\mu  \sin (t)-x_2\right)}{\left(\left(\mu  \sin (t)+x_2\right){}^2+\left(\mu  \cos (t)+x_1\right){}^2\right){}^{3/2}}
\end{eqnarray*}
}

The problem given by the differential equation above is known as the planar, circular, restricted  three body problem, hereafter, PCR3BP. 

We will be considering periodic solutions of the PCR3BP. It is well-known that the period of a periodic solution of the PCR3BP must be a multiple of $2 \pi$,  \cite{H}. When we change the coordinate system so that it rotates along with the motion of the first two bodies, that is, if we consider the functions $y_1(t)$ and $y_2(t)$ instead of $x_1(t)$ and $x_2(t)$ using the relations
\begin{eqnarray*}
y_1(t)&=& x_1(t) \cos(t)+x_2\sin(t)\\
y_2(t)&=& -x_1(t) \sin(t)+x_2(t)\cos(t)
\end{eqnarray*}

then, it follows that the functions $y_1(t)$ and $y_2(t)$ satisfy the following system of differential equations

\begin{eqnarray}\label{eq2}
\ddot{y}_1 &=&   \frac{\partial \omega}{\partial y_1}+2 \dot{y}_2\\
\ddot{y}_2 &=& \frac{\partial \omega}{\partial y_2}-2 \dot{y}_1 \nonumber
\end{eqnarray}

where,

\begin{eqnarray}\label{rsw}
r_1&=&\sqrt{\left(\mu +y_1\right){}^2+y_2^2}\\
r_2&=&\sqrt{\left(\mu +y_1-1\right){}^2+y_2^2}\\
\omega&=&\frac{1-\mu }{\sqrt{\left(\mu +y_1\right){}^2+y_2^2}}+\frac{\mu }{\sqrt{\left(\mu +y_1-1\right){}^2+y_2^2}}+\frac{1}{2} \left(y_1^2+y_2^2\right)\\
 &=&\frac{1}{2} \left(y_1^2+y_2^2\right)+\frac{1-\mu }{r_1}+\frac{\mu }{r_2}\nonumber
\end{eqnarray}

 %Page 120 --- The tree body problem book.pdf found in Downloads
 
 An important aspect of Equation (\ref{eq2}) is that it is an autonomous system,  and, for this reason, the period of a periodic solution does not have to be a multiple of $2 \pi$.  Regarding the relation between the period of periodic solutions of both systems of differential equations we have that  every periodic solution of the PCR3BP defines a periodic solution of Equation (\ref{eq2}), and a $T$-periodic solutions of Equation (\ref{eq2})  defines a periodic solution of the PCR3BP only when $\frac{\pi}{T}$ is a rational number. A detailed discussion on the periodic solutions of Equation (\ref{eq2}) can be found in \cite{H}.
 
 A direct computation shows that for every solution of Equation (\ref{eq2}) we have that
 
\begin{eqnarray}\label{je}
-\dot{y}_1^2-\dot{y}_2^2 + 2 \omega=C
\end{eqnarray}
 
where $C$ is a constant called the Jacobian Integral \cite{VK}. If follows that every solution of Equation (\ref{eq2}) must be contained in the region 

\begin{eqnarray}\label{jr}
U_C=\{((y_1,y_2)  \,  :\, 2 \omega-C\ge 0 \}\, .
\end{eqnarray}

We point out that since the function $\omega$ is not defined on the set $\{ (-\mu,0),(1-\mu,0)\}$, then, we are not considering these two points to be part of $U_C$ .  

In this paper we prove that if  a regular simple closed curve $(y_1(t),y_2(t))$ is a $T$ periodic solution of Equation (\ref{eq2}), and the region $\Omega$ enclosed by this closed curve is contained in the region $U_C\cup \{(-\mu,0),(1-\mu,0)\}$, then, for some integer $k$ we have that 
  
  $$2 T=k \pi\pm \int_\Omega \Delta \ln (\sqrt{2\omega-C})\, dy_1dy_2$$
  
  The choice of $\pm$ depends on the orientation of the periodic solution.  We extend the result to situations when the curve goes  several times around the point $(-\mu,0)$ or around the point $(1-\mu,0)$. 
  
The proofs of the main results in this paper are very basic since they follow from the Stokes theorem. Despite the simplicity of these results, the author considers  that they are interesting. The key result that allows to use the Stokes theorem is Lemma \ref{le}. The author found the formula in this lemma while he was trying to find an explicit solution of the restricted three body problem. He  was trying to find the Tread-mill Sled of the solution, which is a notion that he has used to explain: the profile curve of surfaces in the Euclidean space with constant mean curvature and helicoidal symmetry \cite{P1}, the profile curve of minimal surface in the Euclidean space and helicoidal symmetry, \cite{P2}, and the shape of ramp with the property that a block will slide down with constant speed \cite{P5}. Also, along with Bennet Palmer, they used the Treamill Sled of a curve to study rotating drops  \cite{P3}, \cite{P4}.   {\color{blue} https://www.youtube.com/watch?v=5puYWqd1xO8} displays a video that explains the notion of Treadmill Sled of a curve.

 \section{The main theorem for simple curves.}   \label{s3}              

Let us start this section with the following lemma. 

\begin{lem}\label{le}
If $\alpha(t)=(y_1(t),y_2(t))$ is a regular curve that is a solution of Equation (\ref{eq2}), with Jacobian integral equal to $C$ (see Equation (\ref{je}) for the definition of $C$), then,

\begin{eqnarray}\label{dys}
\dot{y}_1 &=& \sqrt{2\omega-C}\cos(\theta) \\
\dot{y}_2 &=& \sqrt{2\omega-C}\sin(\theta) 
\end{eqnarray}

where  $\theta(t)$ satisfies the differential equation

\begin{eqnarray}\label{dtheta}
  \dot{\theta} =-2+\frac{\partial f}{\partial y_2} \cos(\theta) -\frac{\partial f}{\partial y_1} \sin(\theta)
\end{eqnarray}  

where $f:U_C\to \mathbb{R}$ is given by $f=\sqrt{2\omega-C}$. See Equation (\ref{jr}) for the definition of $U_C$.
\end{lem}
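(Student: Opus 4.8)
The plan is to start from the Jacobian integral relation \eqref{je}, which gives $\dot y_1^2+\dot y_2^2 = 2\omega - C$ along any solution. Since $\alpha$ is regular, the left side is strictly positive (away from the boundary of $U_C$), so we may write the velocity vector in polar form: there is a smooth function $\theta(t)$ with $\dot y_1 = \sqrt{2\omega-C}\cos\theta$ and $\dot y_2 = \sqrt{2\omega-C}\sin\theta$. This is just the statement that a nonvanishing planar vector field has a well-defined (locally, and then globally along a curve by continuous choice) argument; the magnitude is $f = \sqrt{2\omega-C}$ by construction. So the content of the lemma is entirely in the ODE \eqref{dtheta} for $\dot\theta$.

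To get that ODE, I would differentiate the two defining relations with respect to $t$. Differentiating $\dot y_1 = f\cos\theta$ gives $\ddot y_1 = \dot f \cos\theta - f\dot\theta\sin\theta$, and similarly $\ddot y_2 = \dot f\sin\theta + f\dot\theta\cos\theta$, where $\dot f = \frac{\partial f}{\partial y_1}\dot y_1 + \frac{\partial f}{\partial y_2}\dot y_2 = f\bigl(\frac{\partial f}{\partial y_1}\cos\theta + \frac{\partial f}{\partial y_2}\sin\theta\bigr)$. Now I solve for $f\dot\theta$ by taking the combination $-\ddot y_1\sin\theta + \ddot y_2\cos\theta$: the $\dot f$ terms cancel and one is left with $f\dot\theta = -\ddot y_1\sin\theta + \ddot y_2\cos\theta$. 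The next step is to substitute the equations of motion \eqref{eq2}, namely $\ddot y_1 = \frac{\partial\omega}{\partial y_1} + 2\dot y_2$ and $\ddot y_2 = \frac{\partial\omega}{\partial y_2} - 2\dot y_1$. The Coriolis terms contribute $-2\dot y_2\sin\theta - 2\dot y_1\cos\theta = -2f(\sin^2\theta+\cos^2\theta) = -2f$, which after dividing by $f$ produces the $-2$ in \eqref{dtheta}. The remaining piece is $f\dot\theta = -\frac{\partial\omega}{\partial y_1}\sin\theta + \frac{\partial\omega}{\partial y_2}\cos\theta - 2f$.

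The one genuine computation — and the step I expect to be the only real obstacle — is rewriting the gradient-of-$\omega$ terms in terms of the gradient of $f = \sqrt{2\omega - C}$. From $f^2 = 2\omega - C$ we get $2f\frac{\partial f}{\partial y_i} = 2\frac{\partial\omega}{\partial y_i}$, i.e. $\frac{\partial\omega}{\partial y_i} = f\,\frac{\partial f}{\partial y_i}$. Substituting, $f\dot\theta = -f\frac{\partial f}{\partial y_1}\sin\theta + f\frac{\partial f}{\partial y_2}\cos\theta - 2f$, and dividing through by $f$ (legitimate since $f>0$ on a regular solution, equivalently $\dot y_1^2 + \dot y_2^2 > 0$) yields exactly
\begin{eqnarray*}
\dot\theta = -2 + \frac{\partial f}{\partial y_2}\cos\theta - \frac{\partial f}{\partial y_1}\sin\theta,
\end{eqnarray*}
which is \eqref{dtheta}. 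I would close by remarking that the only place regularity is used is to guarantee $f\neq 0$ so that $\theta$ is well-defined and the division is valid, and that $f$ is smooth on the interior of $U_C$ so all the derivatives above make sense.
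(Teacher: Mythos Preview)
Your proposal is correct and follows essentially the same approach as the paper: write the velocity in polar form using the Jacobian integral, differentiate, substitute the equations of motion, and isolate $\dot\theta$ by the combination $-\sin\theta\cdot(\text{first eq.})+\cos\theta\cdot(\text{second eq.})$. Your explicit use of $\partial\omega/\partial y_i = f\,\partial f/\partial y_i$ is exactly the step the paper leaves implicit when it says the expression for $\dot\theta$ follows after multiplying and adding.
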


\begin{proof}
Since the curve $\alpha$ is regular, $\dot{y}_1^2+\dot{y}_2^2$ never vanishes. By the definition of the Jacobian integral we have  $\dot{y}_1^2+\dot{y}_2^2=2\omega-C=f^2$, that is, $|\dot{\alpha}|=f(\alpha)$. Therefore, for some smooth function $\theta$, we have that 

$$\dot{y}_1 = \sqrt{2\omega-C}\cos(\theta)=f\cos(\theta) \com{and}
\dot{y}_2 = \sqrt{2\omega-C}\sin(\theta)=f\sin(\theta) $$

Taking the derivative with respect to $t$ of the two functions above we get

\begin{eqnarray}
\ddot{y}_1&=&(\frac{\partial f}{\partial y_1} f\cos\theta+\frac{\partial f}{\partial y_2} f\sin\theta)\cos\theta-f\dot{\theta}\sin \theta\\
\ddot{y}_2&=&(\frac{\partial f}{\partial y_1} f\cos\theta+\frac{\partial f}{\partial y_2} f\sin\theta)\sin\theta+f\dot{\theta}\cos \theta
\end{eqnarray}

Using Equation (\ref{eq2}) we obtain that

\begin{eqnarray}\label{l1}
(\frac{\partial f}{\partial y_1} f\cos\theta+\frac{\partial f}{\partial y_2} f\sin\theta)\cos\theta-f\dot{\theta}\sin \theta&=&   \frac{\partial \omega}{\partial y_1}+2 f\sin \theta  \\ \label{l2}
(\frac{\partial f}{\partial y_1} f\cos\theta+\frac{\partial f}{\partial y_2} f\sin\theta)\sin\theta+f\dot{\theta}\cos \theta &=& \frac{\partial \omega}{\partial y_2}-2 f \sin \theta
\end{eqnarray}

Multiplying Equation (\ref{l2}) by $\frac{\cos \theta}{f} $ and Equation (\ref{l1}) by $-\frac{\sin \theta}{f} $ and adding them, we obtain the expression for $\dot{\theta}$ given on Equation (\ref{dtheta})
\end{proof}

The following Lemma explain how is the behavior of the function $f$ near the points $(-\mu,0)$ and $(1-\mu,0)$

\begin{lem} \label{l2}
Let $f:U_C\to \mathbb{R}$ be given by $f=\sqrt{2\omega-C}$. If $\beta(t)=(-\mu+\epsilon \cos t, \epsilon \sin t)$ and $n(t)=-(\cos t,\sin t)$, then

$$ \lim_{\epsilon\to 0^+} \frac{\epsilon}{f(\beta(t))} \, \nabla f(\beta(t))\cdot n(t)=\frac{1}{2} $$
Likewise,  if $\beta(t)=(1-\mu+\epsilon \cos t, \epsilon \sin t)$ and $n(t)=-(\cos t,\sin t)$, then

$$ \lim_{\epsilon\to 0^+} \frac{\epsilon}{f(\beta(t))} \, \nabla f(\beta(t))\cdot n(t)=\frac{1}{2} $$

\end{lem}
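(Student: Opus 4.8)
The plan is to isolate the singular part of $\omega$ and treat everything else as a bounded remainder. Near the point $(-\mu,0)$ the distance $r_2$ stays bounded away from $0$ (the two primaries lie one unit apart, since $\mu\le\frac12$), so on a closed disc $D$ about $(-\mu,0)$ we may write $\omega=\frac{1-\mu}{r_1}+h$, where $h=\frac{\mu}{r_2}+\frac12(y_1^2+y_2^2)$ is smooth on $D$. Consequently $f^2=2\omega-C=\frac{2(1-\mu)}{r_1}+(2h-C)$, and on the circle $\beta(t)$ one has $r_1=\epsilon$, so that
$$ f(\beta(t))^2=\frac{2(1-\mu)}{\epsilon}+O(1)\qquad(\epsilon\to0^+),$$
uniformly in $t$; in particular $\beta(t)\in U_C$ for all small $\epsilon$, and $f(\beta(t))\to\infty$.

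Next I would compute $\nabla f$. Differentiating $f^2=2\omega-C$ gives $\nabla f=\nabla\omega/f$. Since $\nabla r_1=\frac{1}{r_1}(\mu+y_1,y_2)$, we get $\nabla\omega=-(1-\mu)\frac{(\mu+y_1,y_2)}{r_1^3}+\nabla h$. Evaluating on $\beta(t)$, where $(\mu+y_1,y_2)=(\epsilon\cos t,\epsilon\sin t)=-\epsilon\,n(t)$ and $r_1=\epsilon$, this becomes
$$\nabla\omega(\beta(t))=(1-\mu)\frac{n(t)}{\epsilon^2}+\nabla h(\beta(t)),$$
with $\nabla h$ bounded on $D$. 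Taking the inner product with the unit vector $n(t)$ yields $\nabla f(\beta(t))\cdot n(t)=\frac{1}{f(\beta(t))}\bigl(\frac{1-\mu}{\epsilon^2}+O(1)\bigr)$.

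Combining the two computations,
$$\frac{\epsilon}{f(\beta(t))}\,\nabla f(\beta(t))\cdot n(t)=\frac{\epsilon}{f(\beta(t))^2}\Bigl(\frac{1-\mu}{\epsilon^2}+O(1)\Bigr)=\frac{\epsilon^2}{2(1-\mu)+O(\epsilon)}\cdot\frac{(1-\mu)+O(\epsilon^2)}{\epsilon^2}=\frac{(1-\mu)+O(\epsilon^2)}{2(1-\mu)+O(\epsilon)},$$
which tends to $\frac12$ as $\epsilon\to0^+$. For the second point $(1-\mu,0)$ the argument is identical after interchanging the roles of $r_1$ and $r_2$ (and of $1-\mu$ and $\mu$): there $\omega=\frac{\mu}{r_2}+\tilde h$ with $\tilde h$ smooth near $(1-\mu,0)$, on the corresponding $\beta(t)$ one has $r_2=\epsilon$ and $(\mu+y_1-1,y_2)=-\epsilon\,n(t)$, and the same cancellation produces the limit $\frac12$.

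The only point requiring care — and hence the main (and rather mild) obstacle — is to verify that every quantity absorbed into the $O(1)$ and $O(\epsilon)$ terms is genuinely bounded uniformly in $t$ as $\epsilon\to0^+$. This rests entirely on the fact that $h$ (respectively $\tilde h$), together with its gradient, is continuous and hence bounded on the closed disc $D$ around the singularity, which holds because the other singularity lies a fixed positive distance away. Once that is granted, the limit follows by inspection of the last displayed quotient.
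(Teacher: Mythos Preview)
Your argument is correct and is precisely the direct computation the paper alludes to (the paper's proof consists only of the sentence ``The proof is a direct computation''). Your isolation of the singular term $\frac{1-\mu}{r_1}$ (resp.\ $\frac{\mu}{r_2}$) and the bookkeeping of the bounded remainder is exactly what that computation amounts to.
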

\begin{proof}
The proof is a direct computation.
\end{proof}

\begin{thm}\label{mtp1}
Let us assume that $\alpha(t)=(y_1(t),y_2(t))$  is a solution of Equation (\ref{eq2}), with Jacobian integral equal to $C$. Let us further assume that 
\begin{itemize}
\item

$\alpha(t)$ is periodic, this is, there exists a positive number $T$ such that  $\alpha(t+T)=\alpha(t)$ for all $t\in\mathbb{R}$.

\item

$\alpha(t)$ is regular, this is,   $|\dot{\alpha}(t)|\ne0$ for all $t\in\mathbb{R}$.
\item

 $\alpha(t)$ is simple, that is, for any $\alpha:[0,T)\to \mathbb{R}^2$ is injective.
 
 \item
 
The region $\Omega$ enclosed by the curve $\alpha$ is contained in the set $U_C$. 
\end{itemize}

We have that

\begin{itemize}
\item
If $\alpha$ goes clockwise around the region $\Omega$ then 
$$2 T= 2 \pi+\iint_\Omega \Delta \ln f dy_1dy_2$$
\item
If $\alpha$ goes counterclockwise around the region $\Omega$ then 
$$2 T= -2 \pi-\iint_\Omega \Delta\ln f dy_1dy_2$$
\end{itemize}
\end{thm}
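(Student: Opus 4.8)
The plan is to integrate the differential equation for $\dot\theta$ from Lemma~\ref{le} over one period and then convert the resulting line integral into an area integral via the divergence (Stokes/Green) theorem. First I would observe that since $\alpha$ is a regular simple closed curve, the angle function $\theta(t)$ from Lemma~\ref{le} is well defined and continuous, and over one period the total change $\theta(T)-\theta(0)$ must be $2\pi$ times the turning number of the curve $\dot\alpha$; because $\alpha$ is a simple closed curve, the Hopf Umlaufsatz gives $\theta(T)-\theta(0)=\mp 2\pi$, with the minus sign when $\alpha$ is traversed clockwise and the plus sign when counterclockwise (the tangent winds once in the direction of traversal). Integrating \eqref{dtheta} over $[0,T]$ then yields
\begin{equation}\label{prop1}
\theta(T)-\theta(0) = -2T + \int_0^T\Bigl(\frac{\partial f}{\partial y_2}\cos\theta - \frac{\partial f}{\partial y_1}\sin\theta\Bigr)\,dt .
\end{equation}

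The next step is to recognize the integrand in \eqref{prop1} as a line integral of a vector field along $\alpha$. Using \eqref{dys}, namely $\dot y_1 = f\cos\theta$ and $\dot y_2 = f\sin\theta$, we have $\cos\theta\,dt = dy_1/f$ and $\sin\theta\,dt = dy_2/f$, so
\begin{equation}\label{prop2}
\int_0^T\Bigl(\frac{\partial f}{\partial y_2}\cos\theta - \frac{\partial f}{\partial y_1}\sin\theta\Bigr)\,dt
= \oint_\alpha \frac{1}{f}\Bigl(\frac{\partial f}{\partial y_2}\,dy_1 - \frac{\partial f}{\partial y_1}\,dy_2\Bigr)
= \oint_\alpha \Bigl(\frac{\partial \ln f}{\partial y_2}\,dy_1 - \frac{\partial \ln f}{\partial y_1}\,dy_2\Bigr).
\end{equation}
Writing $P = \partial_{y_2}\ln f$ and $Q = -\partial_{y_1}\ln f$, Green's theorem gives $\oint_\alpha P\,dy_1 + Q\,dy_2 = \pm\iint_\Omega(\partial_{y_1}Q - \partial_{y_2}P)\,dy_1\,dy_2 = \mp\iint_\Omega\Delta\ln f\,dy_1\,dy_2$, where the sign is dictated by the orientation of $\alpha$ relative to the standard orientation of $\Omega$ (plus for counterclockwise, minus for clockwise). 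Combining this with \eqref{prop1} and the Umlaufsatz value of $\theta(T)-\theta(0)$, and solving for $T$, produces exactly the two claimed formulas; one checks the signs are mutually consistent by matching the clockwise case to the counterclockwise case.

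The main technical point — really the only place care is needed — is the legitimacy of applying Green's theorem, i.e. verifying that $\ln f$ is $C^2$ on a neighborhood of the closed region $\overline\Omega$. This is where the hypothesis $\Omega\subset U_C$ matters: on $U_C$ we have $2\omega - C > 0$ away from the singular points $(-\mu,0),(1-\mu,0)$, so $f=\sqrt{2\omega-C}$ is smooth and positive there, hence $\ln f$ is smooth. The subtlety is that $\Omega$ is stated to be contained in $U_C$ (open condition $2\omega-C>0$), so in particular $\overline\Omega$ avoids both the boundary locus $\{2\omega-C=0\}$ and the two singular points, and no improper-integral or puncture argument is needed here — that refinement is deferred to the later sections of the paper. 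I would also briefly note that $f$ does not vanish on $\Omega$ precisely because $\alpha$ is regular and, more to the point, because $\Omega\subset U_C$ forces $2\omega-C>0$ throughout; without regularity the integrand $1/f$ in \eqref{prop2} could blow up. With these observations in place the proof is a short assembly of Lemma~\ref{le}, the Umlaufsatz, and Green's theorem.
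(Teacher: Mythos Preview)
Your proposal is correct and follows essentially the same route as the paper: integrate the $\dot\theta$ formula from Lemma~\ref{le} over one period, use that the total change in $\theta$ is $\mp 2\pi$ for a simple closed curve, and convert the remaining line integral into $\iint_\Omega \Delta\ln f$. The paper phrases the last step as the divergence theorem with outward normal $n=(\mp\sin\theta,\pm\cos\theta)$ and $ds=f\,dt$, while you phrase it as Green's theorem on the $1$-form $\partial_{y_2}\ln f\,dy_1-\partial_{y_1}\ln f\,dy_2$; these are the same computation.

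One small slip: you describe $U_C$ as the open set $\{2\omega-C>0\}$, but in the paper it is defined with $\ge 0$ (minus the two primaries). So the hypothesis $\Omega\subset U_C$ does not by itself exclude interior points where $f=0$; your claim that ``$\overline\Omega$ avoids the boundary locus $\{2\omega-C=0\}$'' is not an immediate consequence of the stated assumption. The paper's proof makes the same implicit smoothness assumption without comment, so this is not a divergence from the paper, but it is worth being aware that a fully rigorous version needs $f>0$ on $\overline\Omega$, not merely $f\ge 0$.
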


\begin{proof}
If $\alpha$ goes clockwise around the region $\Omega$, then the normal outer vector of the region $\Omega$ is given by $n=(-\sin \theta,\cos \theta)$ and the total change of the angle $\theta$ is  $-2\pi$. By the Stokes theorem we have that,

$$\iint_\Omega \Delta \ln f= \int_{\partial \Omega}\frac{1}{f}\nabla f \cdot n =\int_0^T(\dot{\theta}+2)\, dt= 2T-2\pi$$

If $\alpha$ goes counterclockwise around the region $\Omega$, then the normal outer vector of the region $\Omega$ is given by $n=(\sin \theta,-\cos \theta)$ and the total change of the angle $\theta$ is  $2\pi$. By Stokes theorem we have that,

$$\iint_\Omega \Delta \ln f= \int_{\partial \Omega}\frac{1}{f}\nabla f \cdot n =\int_0^T(-\dot{\theta}-2)\, dt= -2T-2\pi$$

\end{proof}

\begin{thm}\label{mt2}
Let us assume that $\alpha(t)=(y_1(t),y_2(t))$  is a solution of Equation (\ref{eq2}), with Jacobian integral equal to $C$. Let us further assume that 
\begin{itemize}
\item

$\alpha(t)$ is periodic, this is, there exists a positive number $T$ such that  $\alpha(t+T)=\alpha(t)$ for all $t\in\mathbb{R}$.

\item

$\alpha(t)$ is regular, this is,   $|\dot{\alpha}(t)|\ne0$ for all $t\in\mathbb{R}$.
\item

 $\alpha(t)$ is simple, that is, for any $\alpha:[0,T)\to \mathbb{R}^2$ is injective.
 
 \item
 
The region $\Omega$ enclosed by the curve $\alpha$ contains the point $(-\mu,0)$ and is contained in the set $U_C\cup\{(-\mu,0)\}$. 
\end{itemize}

We have that

\begin{itemize}
\item
If $\alpha$ goes clockwise around the region $\Omega$ then 
$$2 T= \pi+\iint_\Omega \Delta \ln f dy_1dy_2$$
\item
If $\alpha$ goes counterclockwise around the region $\Omega$ then 
$$2 T= - \pi-\iint_\Omega \Delta\ln f dy_1dy_2$$
\end{itemize}
\end{thm}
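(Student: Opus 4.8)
The plan is to run the same Stokes-theorem computation as in the proof of Theorem \ref{mtp1}, but on the region obtained from $\Omega$ by deleting a small disk around the singular point $(-\mu,0)$, and then to let the radius of that disk shrink to zero; the extra boundary term produced by the deleted disk is exactly what turns the ``$2\pi$'' of Theorem \ref{mtp1} into ``$\pi$''.

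First I would fix a small $\epsilon>0$, let $D_\epsilon$ be the open disk of radius $\epsilon$ centered at $(-\mu,0)$, and set $\Omega_\epsilon=\Omega\setminus\overline{D_\epsilon}$. For $\epsilon$ small, $\overline{D_\epsilon}$ lies in the interior of $\Omega$, so $\partial\Omega_\epsilon$ is the disjoint union of the curve $\alpha$ and the circle $\partial D_\epsilon$, and $\ln f$ is smooth on a neighbourhood of $\overline{\Omega_\epsilon}$. By the divergence theorem, $\iint_{\Omega_\epsilon}\Delta\ln f\,dy_1dy_2=\int_{\partial\Omega_\epsilon}\frac1f\nabla f\cdot n\,ds$ with $n$ the outward unit normal of $\Omega_\epsilon$. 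Along $\alpha$ the computation is exactly the one carried out in the proof of Theorem \ref{mtp1} (via Lemma \ref{le} and the fact that the total turning of the unit tangent $(\cos\theta,\sin\theta)$ of a simple closed curve is $\mp2\pi$), so that part of the boundary integral equals $2T-2\pi$ if $\alpha$ runs clockwise and $-2T-2\pi$ if $\alpha$ runs counterclockwise. Along the inner circle $\partial D_\epsilon$ the outward normal of $\Omega_\epsilon$ points toward $(-\mu,0)$; parametrizing $\partial D_\epsilon$ by $\beta(t)=(-\mu+\epsilon\cos t,\epsilon\sin t)$ with $n(t)=-(\cos t,\sin t)$, $t\in[0,2\pi)$, and arc-length element $ds=\epsilon\,dt$, that part of the boundary integral is $\int_0^{2\pi}\frac{\epsilon}{f(\beta(t))}\,\nabla f(\beta(t))\cdot n(t)\,dt$. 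Altogether
$$\iint_{\Omega_\epsilon}\Delta\ln f\,dy_1dy_2=\big(\pm 2T-2\pi\big)+\int_0^{2\pi}\frac{\epsilon}{f(\beta(t))}\,\nabla f(\beta(t))\cdot n(t)\,dt,$$
the sign being $+$ for $\alpha$ clockwise and $-$ for $\alpha$ counterclockwise.

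Next I would let $\epsilon\to0^+$. By Lemma \ref{l2} the integrand of the last integral tends to $\tfrac12$; since $\frac{\epsilon}{f(\beta(t))}\nabla f(\beta(t))\cdot n(t)$ extends continuously to $\epsilon=0$ on the compact circle $t\in[0,2\pi]$, this convergence is uniform in $t$ and the integral tends to $\int_0^{2\pi}\tfrac12\,dt=\pi$. For the left-hand side I would check that $\Delta\ln f$ is absolutely integrable up to $(-\mu,0)$: near that point $2\omega-C=\frac{2(1-\mu)}{r_1}+h$ with $h$ smooth, so $\ln f=-\tfrac12\ln r_1+\tfrac12\ln\!\big(2(1-\mu)+r_1h\big)$, and since $\Delta\ln r_1=0$ off the singularity while $\Delta r_1=1/r_1$, one gets $\Delta\ln f=O(1/r_1)$, which is integrable in the plane; hence $\iint_{\Omega_\epsilon}\Delta\ln f\to\iint_\Omega\Delta\ln f$. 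Passing to the limit gives $\iint_\Omega\Delta\ln f\,dy_1dy_2=\pm 2T-2\pi+\pi=\pm 2T-\pi$, i.e. $2T=\pi+\iint_\Omega\Delta\ln f\,dy_1dy_2$ in the clockwise case and $2T=-\pi-\iint_\Omega\Delta\ln f\,dy_1dy_2$ in the counterclockwise case, which is the assertion.

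The main obstacle is the limiting step rather than the Stokes computation: one has to justify that the deleted-disk region integral really converges to the integral over all of $\Omega$ (integrability of $\Delta\ln f$ up to the singular point) and that the boundary term over $\partial D_\epsilon$ really tends to $\pi$, i.e. that the pointwise limit in Lemma \ref{l2} may be moved inside the $t$-integral. Both reduce to a careful analysis of the expansion $2\omega-C\sim 2(1-\mu)/r_1$ near $(-\mu,0)$ together with its first and second derivatives, so the remaining work is essentially bookkeeping around that singularity.
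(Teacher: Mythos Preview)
Your proposal is correct and follows essentially the same approach as the paper: excise a small disk around $(-\mu,0)$, apply Stokes on the punctured region, use Lemma~\ref{le} on the outer boundary and Lemma~\ref{l2} on the inner boundary, then let $\epsilon\to0$. Your write-up is in fact more careful than the paper's, since you explicitly justify the integrability of $\Delta\ln f$ near the singularity and the passage of the limit inside the $t$-integral, points the paper leaves implicit.
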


\begin{proof}
If $\alpha$ goes clockwise around the region $\Omega$, then, the normal outer vector of the region $\Omega$ is given by $n=(-\sin \theta,\cos \theta)$ and the total change of the angle $\theta$ is  $-2\pi$. For any small number $\epsilon$ such that the disk $D_\epsilon$ with center $(-\mu,0)$ and radius $\epsilon$ is contained in $\Omega$ we define $V=\Omega\setminus D_\epsilon$. If we parametrize the boundary of $D_\epsilon$ by $(-\mu+\epsilon \cos(s),\epsilon \sin(s))$, then the outer normal vector of the region $V$ along the boundary of $D_\epsilon$ is $\tilde{n}=(-\cos s, -\sin s)$.  By the Stokes theorem we have that, 

\begin{eqnarray}
\iint_V \Delta \ln f&=& \int_{\partial \Omega}\frac{1}{f}\nabla f \cdot n+\int_{\partial D_\epsilon} \nabla\ln f\cdot \tilde{n} 
\end{eqnarray}

If $\alpha$ goes counterclockwise around the region $\Omega$, then, the normal outer vector of the region $\Omega$ is given by $n=(\sin \theta,-\cos \theta)$ and the total change of the angle $\theta$ is  $2\pi$. By Stokes theorem we have that,

\begin{eqnarray}
\iint_\Omega \Delta \ln f &=& \int_{\partial \Omega}\frac{1}{f}\nabla f \cdot n =\int_0^T(-\dot{\theta}-2)\, dt= -2T-2\pi
\end{eqnarray}

A direct computation, see Lemma \ref{l2}, shows that

$$\lim_{\epsilon\to0}\int_{\partial D_\epsilon} \nabla\ln f\cdot \tilde{n} =\pi$$

Therefore we obtain that 
 \begin{eqnarray*}
\iint_\Omega \Delta \ln f &=& \int_{\partial \Omega}\frac{1}{f}\nabla f \cdot n+\pi\\
&= &\int_0^T(\dot{\theta}+2)\, dt+\pi\\
&=&2T-\pi
\end{eqnarray*}

A similar argument shows the case when the curve $\alpha$ goes counterclockwise around the region $\Omega$.

\end{proof}

Similar arguments show the following theorem. We will omit the proof due to the similarities with the previous one.

\begin{thm}\label{mt3}
Let us assume that $\alpha(t)=(y_1(t),y_2(t))$  is a solution of Equation (\ref{eq2}), with Jacobian integral equal to $C$. Let us further assume that 
\begin{itemize}
\item

$\alpha(t)$ is periodic, this is, there exists a positive number $T$ such that  $\alpha(t+T)=\alpha(t)$ for all $t\in\mathbb{R}$.

\item

$\alpha(t)$ is regular, this is,   $|\dot{\alpha}(t)|\ne0$ for all $t\in\mathbb{R}$.
\item

 $\alpha(t)$ is simple, that is, for any $\alpha:[0,T)\to \mathbb{R}^2$ is injective.
 
\end{itemize}

We have that

\begin{itemize}

 \item
 
If the region $\Omega$ enclosed by the curve $\alpha$ contains the point $(1-\mu,0)$ and is contained in the set $U_C\cup\{(1-\mu,0)\}$,
and $\alpha$ goes clockwise around the region $\Omega$, then 
$$2 T= \pi+\iint_\Omega \Delta \ln f dy_1dy_2$$
\item
 
If the region $\Omega$ enclosed by the curve $\alpha$ contains the point $(1-\mu,0)$ and is contained in the set $U_C\cup\{(1-\mu,0)\}$, and  $\alpha$ goes counterclockwise around the region $\Omega$, then 
$$2 T= - \pi-\iint_\Omega \Delta\ln f dy_1dy_2$$
\item
 
If the region $\Omega$ enclosed by the curve $\alpha$ contains the points $(-\mu,0)$ and $(1-\mu,0)$ and is contained in the set $U_C\cup\{(1-\mu,0),(1-\mu,0)\}$, and  $\alpha$ goes clockwise around the region $\Omega$, then 
$$2 T= \iint_\Omega \Delta\ln f dy_1dy_2$$

\item
 
If the region $\Omega$ enclosed by the curve $\alpha$ contains the points $(-\mu,0)$ and $(1-\mu,0)$ and is contained in the set $U_C\cup\{(-\mu,0),(1-\mu,0)\}$, and  $\alpha$ goes counterclockwise around the region $\Omega$, then 
$$2 T= -\iint_\Omega \Delta\ln f dy_1dy_2$$

\end{itemize}
\end{thm}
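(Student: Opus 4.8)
The plan is to run, case by case, exactly the argument already used for Theorem~\ref{mt2}; the only datum that changes from case to case is which singular points of $\omega$ (equivalently, of $\ln f$) lie in the interior of $\Omega$, and each such interior point contributes, through Lemma~\ref{l2}, a fixed flux of $\pi$.

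For the two statements in which $\Omega$ encloses only $(1-\mu,0)$, I would repeat the proof of Theorem~\ref{mt2} word for word, replacing the disk $D_\epsilon$ centered at $(-\mu,0)$ by the disk of radius $\epsilon$ centered at $(1-\mu,0)$. The only place where the identity of the excised singular point entered that proof is the evaluation $\lim_{\epsilon\to0}\int_{\partial D_\epsilon}\nabla\ln f\cdot\tilde n=\pi$, and the second identity of Lemma~\ref{l2} provides exactly this for $(1-\mu,0)$ as well. Hence the clockwise case gives $2T=\pi+\iint_\Omega\Delta\ln f\,dy_1dy_2$ and the counterclockwise case gives $2T=-\pi-\iint_\Omega\Delta\ln f\,dy_1dy_2$.

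For the two statements in which $\Omega$ encloses both $(-\mu,0)$ and $(1-\mu,0)$, I would excise two small disks at once. Fix $\epsilon>0$ small enough that the closed disks $D^{(1)}_\epsilon$ and $D^{(2)}_\epsilon$ of radius $\epsilon$ about $(-\mu,0)$ and $(1-\mu,0)$ are disjoint and lie in the interior of $\Omega$, and set $V_\epsilon=\Omega\setminus(D^{(1)}_\epsilon\cup D^{(2)}_\epsilon)$. Since $\ln f$ is smooth on $V_\epsilon$, the Stokes (divergence) theorem gives
$$\iint_{V_\epsilon}\Delta\ln f\,dy_1dy_2=\int_{\partial\Omega}\nabla\ln f\cdot n\,ds+\int_{\partial D^{(1)}_\epsilon}\nabla\ln f\cdot\tilde n\,ds+\int_{\partial D^{(2)}_\epsilon}\nabla\ln f\cdot\tilde n\,ds,$$
where $n$ is the outward unit normal of $\Omega$ and, along each small circle, $\tilde n$ is the outward unit normal of $V_\epsilon$, namely the inward radial normal of the corresponding disk. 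Along $\partial\Omega$ one has $ds=f\,dt$, so the first term equals $\int_0^T(\dot\theta+2)\,dt=2T-2\pi$ in the clockwise case ($n=(-\sin\theta,\cos\theta)$, total change of $\theta$ equal to $-2\pi$) and $\int_0^T(-\dot\theta-2)\,dt=-2T-2\pi$ in the counterclockwise case ($n=(\sin\theta,-\cos\theta)$, total change of $\theta$ equal to $2\pi$), using Lemma~\ref{le} for $\dot\theta$ exactly as in Theorem~\ref{mtp1}. By Lemma~\ref{l2} each of the last two integrals tends to $\pi$ as $\epsilon\to0$. Passing to the limit and using $\iint_{V_\epsilon}\Delta\ln f\to\iint_\Omega\Delta\ln f$ yields $\iint_\Omega\Delta\ln f=(2T-2\pi)+2\pi=2T$ in the clockwise case and $\iint_\Omega\Delta\ln f=(-2T-2\pi)+2\pi=-2T$ in the counterclockwise case, which are the two remaining identities.

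The one step that is not pure bookkeeping is justifying $\iint_{V_\epsilon}\Delta\ln f\to\iint_\Omega\Delta\ln f$, i.e. that the classical Laplacian $\Delta\ln f$ is absolutely integrable on a punctured neighborhood of each of the two singular points; I expect this to be the main (though mild) obstacle. It can be handled by noting that near $(-\mu,0)$ we have $2\omega-C=\tfrac{2(1-\mu)}{r_1}+h$ with $h$ bounded and real-analytic there, so $\ln f=-\tfrac12\ln r_1+\tfrac12\ln\bigl((2\omega-C)\,r_1\bigr)$, the second summand being continuous (it tends to $\tfrac12\ln\bigl(2(1-\mu)\bigr)$). Since $\ln r_1$ is harmonic off $(-\mu,0)$ and $\Delta r_1=1/r_1\in L^1_{\mathrm{loc}}(\mathbb R^2)$, one gets $\Delta\ln f=O(1/r_1)$ near $(-\mu,0)$, hence integrable there, and likewise near $(1-\mu,0)$. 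This is also the conceptual reason the $\pi$'s appear: the delta-type mass that $\Delta\ln f$ carries at a singular point of $\omega$ is precisely the $\pi$ produced by the small-circle flux at that point, so each interior singular point shaves a $\pi$ off the $2\pi$ coming from the rotation number of $\alpha$, turning $2T-2\pi$ into $2T-\pi$ (one point) or $2T$ (two points), with the signs reversed in the counterclockwise case.
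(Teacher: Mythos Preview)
Your proposal is correct and follows exactly the approach the paper intends: the paper omits the proof of Theorem~\ref{mt3} entirely, saying only that ``similar arguments show the following theorem'' and referring back to Theorem~\ref{mt2}, which is precisely the template you reproduce (excise a small disk around each enclosed singular point, apply Stokes, use Lemma~\ref{l2} for the $\pi$ contribution per disk, and Lemma~\ref{le} for the outer boundary integral). Your added paragraph justifying the local integrability of $\Delta\ln f$ near the singular points is in fact more careful than anything the paper provides.
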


 \section{The main theorem for curves with index bigger than one.}   \label{mt}    

In this section we consider some special orbits that go around more than once  before they closed. Let us start this section with a definition that clarifies the type of orbits that we will be considering.

\begin{defn}\label{nsimple}
We will say that a closed curve  $\alpha:[0,T]\to \mathbb{R}^2$, with $\alpha(t)=(z_1(t),z_2(t))$, is $n$-simple around the point $p=(u,v)$ if there exists a simple regular curve  $\beta:[0,T]\to\mathbb{R}^2$, with $\beta(t)=(w_1(t),w_2(t))$ such that 

$$(u,v)+((w_1(t)-u)+i (w_2(t)-v))^n=z_1(t)+i z_2(t) $$

We will call the curve $\beta(t)$ the lifting of $\alpha$.
\end{defn}

\begin{thm}
Let us assume that $\alpha(t)=(y_1(t),y_2(t))$  is a solution of Equation (\ref{eq2}), with Jacobian integral equal to $C$. Let us further assume that $\alpha$ is $T$-periodic and $n$ simple around $(\mu,0)$  with lifting $\beta(t)$  (see Definition \ref{nsimple}).  If we denote by  $f=\sqrt{2 \omega-C}$,  $\tilde{\Omega}$ the region enclosed by $\beta$, and $\tilde{h}=\ln \tilde{f}$ where $\tilde{f}=f\circ \phi$,  and

$$\phi(z_1,z_2)=(\mathbb{R}{\rm e}(\xi),\mathbb{I}{\rm m}(\xi))\com{with} \xi(z_1,z_2)=-\mu+(z_1+\mu+i z_2)^n$$

We have that 

\begin{itemize}

 \item
 
If $\beta(t)$ goes clockwise around $\tilde{\Omega}$ and $\phi(\tilde{\Omega})$ is contained in $U_C\cup\{(-\mu,0)\}$, then 

$$2 T= n \pi+\iint_{\tilde{\Omega}} \Delta \tilde{h} dz_1dz_2$$
\item
 
If $\beta(t)$ goes counterclockwise around $\tilde{\Omega}$ and $\phi(\tilde{\Omega})$ is contained in $U_C\cup\{(-\mu,0)\}$, then 

$$2 T= -n \pi-\iint_{\tilde{\Omega}} \Delta \tilde{h} dz_1dz_2$$

\end{itemize}
\end{thm}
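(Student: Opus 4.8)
The plan is to reduce the statement to Theorem~\ref{mt2} (or Theorem~\ref{mt3} when $\phi(\tilde\Omega)$ wraps the singularity) by a change of variables induced by the $n$-th power map, exactly as in Definition~\ref{nsimple}. First I would observe that $\alpha(t)$ traces the curve $\phi(\beta(t))$, so the period $T$ of $\alpha$ equals the period of $\beta$, and the hypotheses guarantee that $\alpha$ itself is a solution of Equation~(\ref{eq2}) with Jacobian integral $C$. By Lemma~\ref{le} applied to $\alpha$, there is an angle function $\theta(t)$ with $\dot y_1 = f(\alpha)\cos\theta$, $\dot y_2 = f(\alpha)\sin\theta$ and $\dot\theta = -2 + \tfrac{\partial f}{\partial y_2}\cos\theta - \tfrac{\partial f}{\partial y_1}\sin\theta = -2 + \nabla\ln f(\alpha)\cdot n$, where $n = (-\sin\theta,\cos\theta)$. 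Integrating over one period and using that $\alpha$ winds $n$ times around $(-\mu,0)$ (since $\beta$ is simple around it and $\phi$ is the $n$-th power map centred there), the total change of $\theta$ is $-2\pi n$ in the clockwise case, so $\int_0^T(\dot\theta + 2)\,dt = 2T - 2\pi n$.

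Next I would rewrite $\int_0^T \nabla\ln f(\alpha)\cdot n\,dt$ as a boundary integral over $\partial(\phi(\tilde\Omega))$ and pull it back under $\phi$. The key analytic input is that $\phi$ is (the real form of) the holomorphic map $\xi = -\mu + (z_1+\mu+iz_2)^n$, hence conformal away from $(-\mu,0)$, with conformal factor $|\xi'|^2 = n^2|z_1+\mu+iz_2|^{2(n-1)}$; therefore $\Delta\tilde h = \Delta(\ln f\circ\phi) = (\Delta\ln f)\circ\phi \cdot |\xi'|^2$, and the area form transforms by the same Jacobian $|\xi'|^2$, so $\iint_{\tilde\Omega}\Delta\tilde h\,dz_1dz_2 = \iint_{\phi(\tilde\Omega)}\Delta\ln f\,dy_1dy_2$ whenever the singularity is avoided. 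Combining this with Theorem~\ref{mt2} (applied to the region $\phi(\tilde\Omega)$ if it contains $(-\mu,0)$) or with the Stokes computation directly if it does not, the extra contribution from the puncture is $+\pi$ in the former case, and $0$ in the latter; in both cases the bookkeeping yields $2T = n\pi + \iint_{\tilde\Omega}\Delta\tilde h\,dz_1dz_2$. The counterclockwise case follows by reversing the orientation of $n$ and the sign of the total angle change, exactly as in the proof of Theorem~\ref{mtp1}.

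The main obstacle I anticipate is handling the behaviour at the branch point $(-\mu,0)$ cleanly: one must excise a small disk $D_\epsilon$ around the preimage of $(-\mu,0)$ in the $z$-plane, apply Stokes on $\tilde\Omega\setminus D_\epsilon$, and show the circle integral of $\nabla\tilde h\cdot\tilde n$ over $\partial D_\epsilon$ tends to the right constant as $\epsilon\to 0$. Near that point $\phi$ behaves like $(z_1+\mu+iz_2)^n$, so a small circle of radius $\epsilon$ maps to a curve of radius $\sim\epsilon^n$ traversed $n$ times; using Lemma~\ref{l2} on the image together with the conformal change of variables, the limit of the boundary integral is $n\cdot\tfrac12\cdot 2\pi = n\pi$ — this is precisely where the factor $n\pi$ in the statement originates, and getting the constant exactly (as opposed to up to sign or factor of $2$) is the delicate part. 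Everything else is the same Stokes-theorem bookkeeping already carried out in Theorems~\ref{mtp1}--\ref{mt3}.
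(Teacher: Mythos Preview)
Your outline is essentially the paper's argument: excise a small disk $D_\epsilon$ about $(-\mu,0)$ in the $z$-plane, apply Stokes to $\tilde h$ on $\tilde\Omega\setminus D_\epsilon$, use the Cauchy--Riemann equations for $\phi$ to identify the outer boundary integral with $\int_0^T(\dot\theta+2)\,dt=2T-2n\pi$ (from Lemma~\ref{le} and the fact that $\theta$ changes by $-2n\pi$), and show the inner circle contributes $n\pi$ in the limit. Your third paragraph is exactly this, and the paper carries it out by first proving the pointwise identity
\[
\frac{\partial \tilde h}{\partial z_2}\dot\beta_1-\frac{\partial \tilde h}{\partial z_1}\dot\beta_2
=\frac{\partial h}{\partial y_2}\dot\alpha_1-\frac{\partial h}{\partial y_1}\dot\alpha_2
\]
via the chain rule and Cauchy--Riemann, which is the clean way to ``pull back'' the line integral.

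The one piece of your second paragraph that does not work is the appeal to Theorem~\ref{mt2} on $\phi(\tilde\Omega)$ together with the area change-of-variables $\iint_{\tilde\Omega}\Delta\tilde h=\iint_{\phi(\tilde\Omega)}\Delta\ln f$. Theorem~\ref{mt2} requires $\alpha$ simple, which it is not here, and $\phi:\tilde\Omega\to\phi(\tilde\Omega)$ is a branched $n$-fold cover rather than a diffeomorphism, so the naive change of variables for the area integral is false unless you count the image with multiplicity (in which case you cannot invoke Theorem~\ref{mt2} anyway). Drop that detour and keep the direct Stokes computation of your last paragraph; that is precisely what the paper does.
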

\begin{proof}
Let us denote by $\beta_1(t)$ and $\beta_2(t)$ the two entries of the curve $\beta$, that is, $\beta(t)=(\beta_1(t),\beta_2(t))$. Likewise, we define $\alpha_1(t)$ and $\alpha_2(t)$ and $\phi_1$, $\phi_2$. Notice that if we define $h(y_1,y_2)=\ln f(y_1,y_2)$, then, $\tilde{h}=h\circ \phi$. Using the change rule we obtain that

\begin{eqnarray}\label{ep1}
\frac{\partial \tilde{h}}{\partial z_1}&=&\frac{\partial h }{\partial y_1}\frac{\partial \phi_1}{\partial z_1 }+\frac{\partial h }{\partial y_2}\frac{\partial \phi_2}{\partial z_1 }\\
\frac{\partial \tilde{h}}{\partial z_2}&=&\frac{\partial h }{\partial y_1}\frac{\partial \phi_1}{\partial z_2 }+\frac{\partial h }{\partial y_2}\frac{\partial \phi_2}{\partial z_2 }
\end{eqnarray}

Once again, using the chain rule we obtain that 

\begin{eqnarray}
\dot{\alpha}_1(t)&=&\frac{\partial \phi_1}{\partial z_1 } \dot{\beta}_1(t)+ \frac{\partial \phi_1}{\partial z_2 } \dot{\beta}_2(t)   \\
\dot{\alpha}_2(t)&=&\frac{\partial \phi_2}{\partial z_1 } \dot{\beta}_1(t)+ \frac{\partial \phi_2}{\partial z_2 } \dot{\beta}_2(t) 
\end{eqnarray}

Since the function $\phi$ is an orientation-preserving conformal map, then $\phi_1$ and $\phi_2$  satisfy the Cauchy-Riemann equations, that is,

\begin{eqnarray}\label{ep2}
\frac{\partial \phi_1}{\partial z_1}=\frac{\partial \phi_2}{\partial z_2}\com{and} \frac{\partial \phi_1}{\partial z_2}=-\frac{\partial \phi_2}{\partial z_1}
\end{eqnarray}

Combining the equations (\ref{ep1})-(\ref{ep2}) we get that 

$$ \frac{\partial \tilde{h}}{\partial z_2}\dot{\beta}_1(t)- \frac{\partial \tilde{h}}{\partial z_1}\dot{\beta}_2(t)=
 \frac{\partial h}{\partial y_2}\dot{\alpha}_1(t)- \frac{\partial h}{\partial y_1}\dot{\alpha}_2(t) $$
 
 Let us denote by $D_\epsilon$ a disk with radius epsilon and center at $(-\mu,0)$ contained in $\tilde{\Omega}$ and $\tilde{V}=\tilde{\Omega}\setminus D_\epsilon$. Notice that if  we parametrize the boundary of $D_\epsilon$ by $(-\mu+\epsilon \cos(s),\epsilon \sin(s))$, then the outer normal vector of the region $\tilde{V}$ along the boundary of $D_\epsilon$ is $\tilde{n}=(-\cos s, -\sin s)$ . If $\beta(t)$ goes clockwise around $\tilde{\Omega}$, by Stokes theorem we have that 
 
 \begin{eqnarray}
\iint_{\tilde{V}} \Delta\tilde{h}&=& \int_0^T\frac{\partial \tilde{h}}{\partial z_2}\dot{\beta}_1(t)- \frac{\partial \tilde{h}}{\partial z_1}\dot{\beta}_2(t)\, dt+\int_{\partial D_\epsilon} \nabla \tilde{h} \cdot \tilde{n}
\end{eqnarray}

Taking the limite when $\epsilon$ goes to zero we obtain that

\begin{eqnarray}
\iint_{\tilde{V}} \Delta \tilde{h}&=& \int_0^T\frac{\partial h}{\partial y_2}\dot{\alpha}_1(t)- \frac{\partial h}{\partial y_1}\dot{\alpha}_2(t)\, dt+n\pi\\
&=& \int_0^T(\dot{\theta}+2)+n\pi\\
&=& -2n\pi+2T+n\pi\\
&=& 2T-n\pi
\end{eqnarray}

A similar argument proves the case when $\beta(t)$ goes counterclockwise around $\tilde{\Omega}$
\end{proof}

A similar result holds when the orbit goes around $(1-\mu,0)$. We have

\begin{thm}\label{wi}
Let us assume that $\alpha(t)=(y_1(t),y_2(t))$  is a solution of Equation (\ref{eq2}), with Jacobian integral equal to $C$. Let us further assume that $\alpha$ is $T$-periodic and $n$ simple around $(1-\mu,0)$ with lifting $\beta(t)$  (see Definition \ref{nsimple}).  If  we denote by $f=\sqrt{2 \omega-C}$, $\tilde{\Omega}$ the region enclosed by $\beta$ and $h=\ln \tilde{f}$ where $\tilde{f}=f\circ \phi$,  

$$\phi_n(z_1,z_2)=(\mathbb{R}{\rm e}(\xi_n),\mathbb{I}{\rm m}(\xi_n))\com{with} \xi_n(z_1,z_2)=1-\mu+(z_1+\mu-1+i z_2)^n$$

We have that 

\begin{itemize}

 \item
 
If $\beta(t)$ goes clockwise around $\tilde{\Omega}$ and $\phi(\tilde{\Omega})$ is contained in $U_C\cup\{(1-\mu,0)\}$, then 

$$2 T= n \pi+\iint_{\tilde{\Omega}} \Delta h dz_1dz_2$$
\item
 
If $\beta(t)$ goes counterclockwise around $\tilde{\Omega}$ and $\phi(\tilde{\Omega})$ is contained in $U_C\cup\{(1-\mu,0)\}$, then 

$$2 T= -n \pi-\iint_{\tilde{\Omega}} \Delta h dz_1dz_2$$

\end{itemize}
\end{thm}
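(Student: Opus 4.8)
The plan is to reproduce, almost verbatim, the proof of the previous theorem (the one about orbits $n$-simple around $(-\mu,0)$), with $(-\mu,0)$ replaced by $(1-\mu,0)$, the map $\phi$ replaced by $\phi_n$, and the first half of Lemma \ref{l2} replaced by its second half. Set $h=\ln f$ on $U_C$ and $\tilde h=h\circ\phi_n$; this $\tilde h$ is the function denoted $h$ in the statement. The hypothesis $\phi_n(\tilde\Omega)\subseteq U_C\cup\{(1-\mu,0)\}$ guarantees that $\tilde h$ is smooth on $\tilde\Omega$ except at the single point $z=(1-\mu,0)$, the unique $\phi_n$-preimage of $(1-\mu,0)$, which lies inside $\tilde\Omega$. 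The first step is to record the chain rule for $\nabla\tilde h$ in terms of $\nabla h$ and $D\phi_n$, together with the identity $\dot\alpha=D\phi_n(\beta)\dot\beta$ coming from $\alpha=\phi_n\circ\beta$. Since $\phi_n$ is an orientation-preserving conformal map on $\mathbb{R}^2\setminus\{(1-\mu,0)\}$, its components satisfy the Cauchy--Riemann equations there, and combining these facts yields the pointwise identity
\[
\frac{\partial \tilde h}{\partial z_2}\dot\beta_1-\frac{\partial \tilde h}{\partial z_1}\dot\beta_2=\frac{\partial h}{\partial y_2}\dot\alpha_1-\frac{\partial h}{\partial y_1}\dot\alpha_2
\]
along the lifted curve, exactly as in the $(-\mu,0)$ case.

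Next I would evaluate the right-hand side with Lemma \ref{le} applied to $\alpha$: from $\dot\alpha_1=f\cos\theta$, $\dot\alpha_2=f\sin\theta$ one obtains $\frac{\partial h}{\partial y_2}\dot\alpha_1-\frac{\partial h}{\partial y_1}\dot\alpha_2=\frac{\partial f}{\partial y_2}\cos\theta-\frac{\partial f}{\partial y_1}\sin\theta=\dot\theta+2$, so $\int_0^T(\frac{\partial \tilde h}{\partial z_2}\dot\beta_1-\frac{\partial \tilde h}{\partial z_1}\dot\beta_2)\,dt=\int_0^T(\dot\theta+2)\,dt$. The total change of $\theta$ over one period is read off from the complex factorization $\dot\alpha=n\,W^{\,n-1}\dot\beta$, with $W$ the complex coordinate $z_1+\mu-1+iz_2$: since $\beta$ is a simple closed curve enclosing $(1-\mu,0)$, both $\arg W$ and $\arg\dot\beta$ vary by $-2\pi$ when $\beta$ is clockwise and by $+2\pi$ when it is counterclockwise, so $\theta$ changes by $\mp2n\pi$ and hence $\int_0^T(\dot\theta+2)\,dt=2T\mp2n\pi$.

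Then I would excise the disk $D_\epsilon$ of radius $\epsilon$ centered at $(1-\mu,0)$, set $\tilde V=\tilde\Omega\setminus D_\epsilon$, and apply Stokes' theorem to $\Delta\tilde h=\Div(\nabla\tilde h)$ on $\tilde V$. The outer boundary $\partial\tilde\Omega$, parametrized by $\beta$, contributes precisely the line integral above (its sign governed by the orientation of $\beta$), while the inner boundary $\partial D_\epsilon$, with inner unit normal $\tilde n=-(\cos s,\sin s)$, contributes a term that I claim tends to $n\pi$ as $\epsilon\to0$. This is the one place the branching enters: near $(1-\mu,0)$ one has $2\omega-C\sim 2\mu/r_2$, hence $h\sim\mathrm{const}-\tfrac12\ln r_2$; and $\phi_n$ sends the circle of radius $\epsilon$ about $(1-\mu,0)$ onto the circle of radius $\epsilon^n$ traced $n$ times, so $\tilde h\sim\mathrm{const}-\tfrac n2\ln|W|$ near $z=(1-\mu,0)$ and $\nabla\tilde h\cdot\tilde n\sim n/(2\epsilon)$ on $\partial D_\epsilon$, giving the limit $n\pi$; this is the second half of Lemma \ref{l2}, corrected by the factor $n$. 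Assembling the three pieces, $\iint_{\tilde\Omega}\Delta\tilde h=(2T\mp2n\pi)+n\pi=2T\mp n\pi$, which after rearrangement is exactly the stated pair of formulas (clockwise: $2T=n\pi+\iint_{\tilde\Omega}\Delta\tilde h\,dz_1dz_2$; counterclockwise: $2T=-n\pi-\iint_{\tilde\Omega}\Delta\tilde h\,dz_1dz_2$).

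The only step that is not pure bookkeeping, and the one I would verify most carefully, is the inner-boundary computation: one must check that a conformal branch point of order $n$ turns the $-\tfrac12\ln$ logarithmic pole of $h$ at $(1-\mu,0)$ into a $-\tfrac n2\ln$ pole of $\tilde h$, so that the flux of $\nabla\tilde h$ through a small circle equals $n\pi$ rather than $\pi$. Everything else is word-for-word parallel to the $(-\mu,0)$ theorem, which is why the paper is content to omit the argument.
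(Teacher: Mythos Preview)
Your proposal is correct and follows exactly the paper's approach; indeed the paper's own proof is the single sentence ``The proof follows the same arguments as those in the previous theorem,'' and you have faithfully reproduced that argument with $(-\mu,0)$ replaced by $(1-\mu,0)$ and the second half of Lemma~\ref{l2} in place of the first. One bookkeeping slip in your final assembly: in the counterclockwise case the outer-boundary contribution to Stokes is $-\int_0^T(\dot\theta+2)\,dt$ (not $+$), so the correct computation is $\iint_{\tilde\Omega}\Delta\tilde h=-(2T+2n\pi)+n\pi=-2T-n\pi$, which then rearranges to the stated $2T=-n\pi-\iint_{\tilde\Omega}\Delta\tilde h$; your compressed ``$2T\mp n\pi$'' gets that sign wrong.
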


\begin{proof}
The proof follows the same arguments as those in the previous theorem.
\end{proof}

Once again, the same argument can be applied when the orbit goes around any given point, in particular one of the Lagrangian points. We have,

\begin{thm}\label{wis}
Let us assume that $\alpha(t)=(y_1(t),y_2(t))$  is a solution of Equation (\ref{eq2}), with Jacobian integral equal to $C$. Let us further assume that $\alpha$ is $T$-periodic and $n$ simple around $(u,v)$ with lifting $\beta(t)$  (see Definition \ref{nsimple}).  If  we denote by $f=\sqrt{2 \omega-C}$, $\tilde{\Omega}$ the region enclosed by $\beta$ and $h=\ln \tilde{f}$ where $\tilde{f}=f\circ \phi$,  

$$\phi_n(z_1,z_2)=(\mathbb{R}{\rm e}(\xi_n),\mathbb{I}{\rm m}(\xi_n))\com{with} \xi_n(z_1,z_2)=u+iv+(z_1-u+i (z_2-v))^n$$

We have that 

\begin{itemize}

 \item
 
If $\beta(t)$ goes clockwise around $\tilde{\Omega}$ and $\phi(\tilde{\Omega})$ is contained in $U_C$, then 

$$2 T= 2 n \pi+\iint_{\tilde{\Omega}} \Delta h dz_1dz_2$$
\item
 
If $\beta(t)$ goes counterclockwise around $\tilde{\Omega}$ and $\phi(\tilde{\Omega})$ is contained in $U_C$, then 

$$2 T= -2 n \pi-\iint_{\tilde{\Omega}} \Delta h dz_1dz_2$$

\end{itemize}
\end{thm}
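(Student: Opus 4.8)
The plan is to mimic the proof of the $n$-simple theorems around $(-\mu,0)$ and $(1-\mu,0)$, but now observe that the blow-up contribution at the branch point disappears because the function $f$ is smooth (not singular) at a generic point $(u,v)$. Concretely, I would set $h=\ln f$ and $\tilde h=h\circ\phi_n$, and run the chain-rule computation exactly as before: since $\phi_n$ is an orientation-preserving conformal map, $\phi_1,\phi_2$ satisfy the Cauchy--Riemann equations, and combining these with the chain rule for $\nabla\tilde h$ and for $\dot\alpha=(D\phi_n)\dot\beta$ yields the pointwise identity
\begin{eqnarray*}
\frac{\partial\tilde h}{\partial z_2}\dot\beta_1(t)-\frac{\partial\tilde h}{\partial z_1}\dot\beta_2(t)=\frac{\partial h}{\partial y_2}\dot\alpha_1(t)-\frac{\partial h}{\partial y_1}\dot\alpha_2(t).
\end{eqnarray*}
By Lemma \ref{le}, the right-hand side equals $\dot\theta+2$ along the orbit $\alpha$, where $\theta$ is the angle function of Lemma \ref{le}; integrating over one period gives $\int_0^T(\dot\theta+2)\,dt=2T\mp 2n\pi$, the sign and the factor $n$ coming from the fact that as $\beta$ traverses $\tilde\Omega$ once (total turning $\mp2\pi$), the image curve $\alpha=\phi_n\circ\beta$ winds $n$ times around $(u,v)$, so $\theta$ changes by $\mp2n\pi$.

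Next I would apply Stokes' theorem directly on $\tilde\Omega$. The key structural point is that $\phi_n$ is a local diffeomorphism away from the single preimage of $(u,v)$, namely the point $\beta^{-1}$ maps to $(u,v)$ only at the critical point $z=(u,v)$ of $\xi_n$; but since $(u,v)\in U_C$ and $f$ is smooth and nonvanishing near $(u,v)$, the composition $\tilde h=\ln(f\circ\phi_n)$ extends to a $C^\infty$ function on a full neighborhood of $\tilde\Omega$ (the branching of $\phi_n$ is absorbed because $f\circ\phi_n$ is still smooth, being the composition of a smooth map with an analytic one). Hence no puncture is needed: Stokes gives
\begin{eqnarray*}
\iint_{\tilde\Omega}\Delta\tilde h\,dz_1dz_2=\int_{\partial\tilde\Omega}\nabla\tilde h\cdot n\,ds=\int_0^T\Big(\frac{\partial\tilde h}{\partial z_2}\dot\beta_1-\frac{\partial\tilde h}{\partial z_1}\dot\beta_2\Big)dt=2T-2n\pi
\end{eqnarray*}
in the clockwise case, and the reversed-sign statement in the counterclockwise case. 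Rearranging gives $2T=2n\pi+\iint_{\tilde\Omega}\Delta h\,dz_1dz_2$ as claimed.

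The main obstacle I anticipate is justifying carefully that $\tilde h$ has no singularity at the branch point so that Stokes applies on all of $\tilde\Omega$ without excising a disk. This is exactly why the answer has $2n\pi$ rather than $n\pi$: in Theorems \ref{mt2} and \ref{wi} the function $f$ vanished at $(-\mu,0)$ and $(1-\mu,0)$, forcing the puncture-and-limit argument whose boundary term contributed the extra $\mp\pi$ via Lemma \ref{l2}; here, because $(u,v)$ is a regular point of $f$ (an interior point of $U_C$, in particular $f(u,v)>0$), that boundary term vanishes, $\int_{\partial D_\epsilon}\nabla\tilde h\cdot\tilde n\to 0$ as $\epsilon\to0$, and one recovers the ``pure'' relation with $2n\pi$. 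A secondary point to check is the direction/turning bookkeeping — that $\beta$ simple with clockwise orientation forces $\alpha$ to wind $n$ times clockwise around $(u,v)$ and hence $\theta$ decreases by $2n\pi$ — but this is the same index computation implicit in Definition \ref{nsimple} and the earlier theorems, so I would state it briefly and refer back. Everything else is the routine chain-rule and Cauchy--Riemann manipulation already carried out in the proof of the theorem preceding Theorem \ref{wi}, so I would simply say ``following the same steps'' and only highlight where the regularity of $f$ at $(u,v)$ changes the boundary contribution.
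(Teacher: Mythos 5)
Your proof is correct and is essentially the argument the paper intends: Theorem \ref{wis} is stated without a separate proof precisely because the chain-rule/Cauchy--Riemann identity, the total change $\mp 2n\pi$ of $\theta$ over one period, and Stokes' theorem applied directly on $\tilde{\Omega}$ (no excised disk, since $\ln(f\circ\phi_n)$ is smooth at the branch point when $\phi_n(\tilde{\Omega})\subset U_C$) give $2T=\pm\left(2n\pi+\iint_{\tilde{\Omega}}\Delta h\,dz_1dz_2\right)$ exactly as you describe. One slip in your explanatory aside: at $(-\mu,0)$ and $(1-\mu,0)$ the function $f=\sqrt{2\omega-C}$ does not vanish but blows up like $r^{-1/2}$ (this is what Lemma \ref{l2} encodes), and it is this singularity of $\ln f$ that produces the $n\pi$ boundary term in Theorems \ref{mt2} and \ref{wi}; the correction does not affect your argument here, which only uses that $f$ is smooth and positive near $(u,v)$.
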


%%Examples 

\section{Examples}

In this section we provide numerically periodic solutions of Equation (\ref{eq2}) and we use them to verify some of the theorems in Section \ref{mt} and Section \ref{s3}.  In order to somehow give these examples a real application, we took $\mu=0.000953875$, which is usually the value considered for the system Sun-Jupiter-Asteroid.  \cite{HJ}

\subsection{Example 1} \label{ex1} Let us consider the initial conditions for Equation (\ref{eq2})

\begin{eqnarray*}
y_1(0)=0.487957127501505 & & y_2(0)=0.84849821703225\\
\dot{y}_1(0)=-0.036041155996589 & & \dot{y}_2(0)=0.02072666577125
\end{eqnarray*}

A direct verification shows that within an error of $10^{-10}$, the solution $\alpha(t)$ is periodic with period $T=6.3036094149426$. In this case the Jacobian integral is $2.9986240063314$. Figure \ref{fig1} shows the orbit of this periodic motion.

\begin{figure}[hbtp]
\begin{center}\includegraphics[width=.4\textwidth]{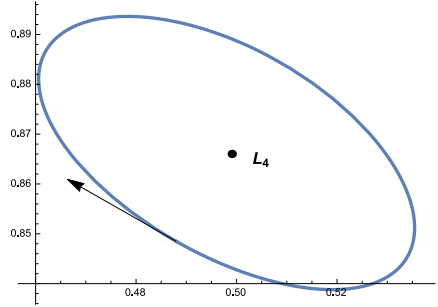}
\end{center}
\caption{Orbit of the periodic solution given in  Example \ref{ex1}}\label{fig1}
\end{figure}

A numerical computation shows that $\iint_\Omega \Delta \ln f=6.32403$ and therefore $2 \pi +\iint_\Omega \Delta \ln f=12.6079$ while $2T=12.607219$. This example verifies Theorem \ref{mtp1} when the orbit goes clockwise.

%Example 2

\subsection{Example 2} \label{ex2} Let us consider the initial conditions  for Equation (\ref{eq2})

\begin{eqnarray*}
y_1(0)=1.01159848498974\quad y_2(0)=0\quad
\dot{y}_1(0)=0\quad  \dot{y}_2(0)=0.26384566980412
\end{eqnarray*}

A direct verification shows that within an error of $10^{-10}$, the solution $\alpha(t)$ is periodic with period $T=0.30139544664015$. In this case the Jacobian integral is $3.0790227765880$. Figure \ref{fig2} shows the orbit of this periodic motion.

\begin{figure}[hbtp]
\begin{center}\includegraphics[width=.4\textwidth]{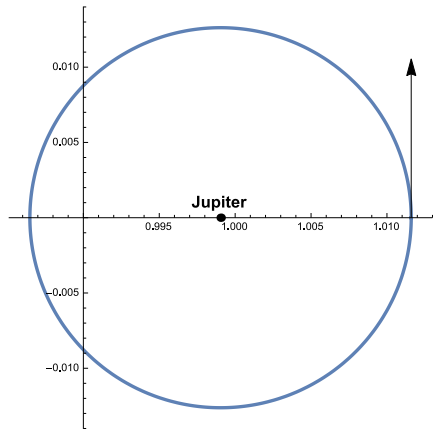}
\end{center}
\caption{Orbit of the periodic solution given in  Example \ref{ex2}}\label{fig2}
\end{figure}

Numerical approximation shows that $\iint_\Omega \Delta \ln f=-3.74433$ and therefore $-\pi -\iint_\Omega \Delta \ln f=0.602739$ while $2T=0.60279089$. This example verifies Theorem \ref{mt3} when the orbit goes counterclockwise and it goes around Jupiter.

%Example 3

\subsection{Example 3} \label{ex3} Let us consider the initial conditions  for Equation (\ref{eq2})

\begin{eqnarray*}
y_1(0)=1.285278846123773 &\quad& y_2(0)=3.401751107285172\\
\dot{y}_1(0)=3.892316782809678&\quad&  \dot{y}_2(0)=-1.47062858674288
\end{eqnarray*}

A direct verification shows that within an error of $10^{-5}$, the solution $\alpha(t)$ is periodic with period $T=5.4912835927302$. In this case the Jacobian integral is $-3.5390576031917$. Figure \ref{fig3} shows the orbit of this periodic motion.

\begin{figure}[hbtp]
\begin{center}\includegraphics[width=.4\textwidth]{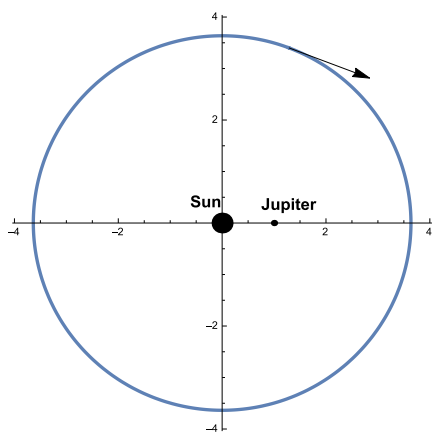}
\end{center}
\caption{Orbit of the periodic solution given in  Example \ref{ex3}}\label{fig3}
\end{figure}

Numerical approximation shows that $\iint_\Omega \Delta \ln f=10.9823$ while $2T=10.982567$. This example verifies Theorem \ref{mt3} when the orbit goes clockwise and it goes around the Sun and Jupiter.

%Example 4

\subsection{Example 4} \label{ex4} Let us consider the initial conditions  for Equation (\ref{eq2})

\begin{eqnarray*}
y_1(0)=0.3964805517652452 &\quad& y_2(0)=-0.07419606744562268\\
\dot{y}_1(0)=0.2120527494053103&\quad&  \dot{y}_2(0)=1.133143493746107
\end{eqnarray*}

A direct verification shows that within an error of $10^{-5}$, the solution $\alpha(t)$ is periodic with period $T=6.2849221865548$. In this case the Jacobian integral is $3.7789562336238$. Figure \ref{fig4} shows the orbit of this periodic motion.

\begin{figure}[hbtp]
\begin{center}\includegraphics[width=.4\textwidth]{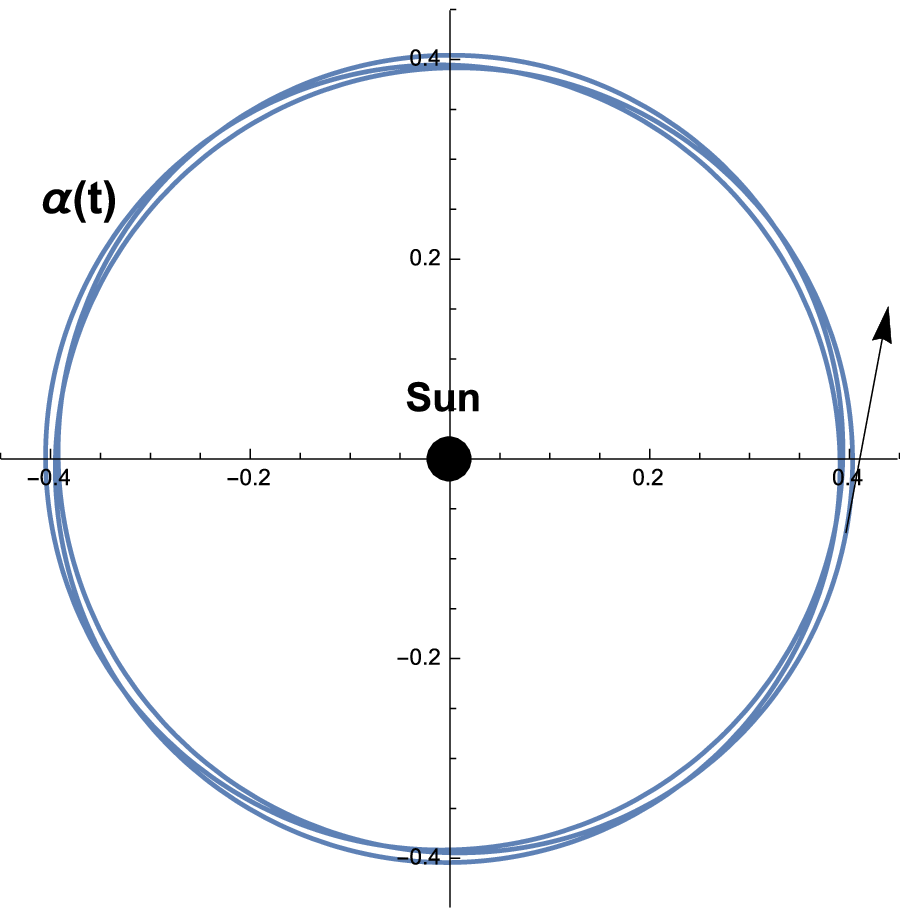}\hskip.2cm\includegraphics[width=.4\textwidth]{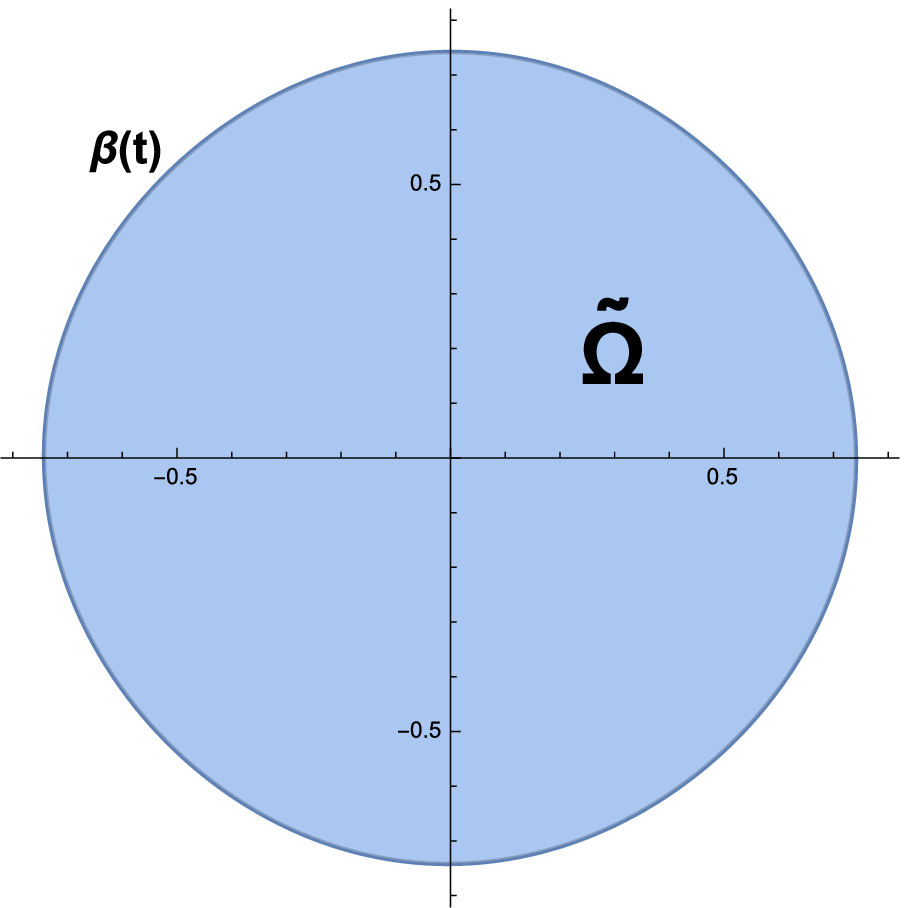}
\end{center}
\caption{On the left we have the orbit of the periodic solution given in  Example \ref{ex4}. This orbit $\alpha$ goes three times around the 
Sun before closing. Also this orbit is $3$-simple around the Sun because the curve $\beta(t)$ shown on the right has the property that $-\mu+(\beta+\mu)^3=\alpha$. In the last expression we had identified $(x,y)$ with the complex number $x+i y$. This figure also shows the region $\tilde{\Omega}$ in $\mathbb{R}^2$ where we integrate on Theorem \ref{wi}. Notice that $\beta$ is the lifting of $\alpha$. }\label{fig4}
\end{figure}

Numerical approximation shows that $\int_{\tilde{\Omega} }\Delta \tilde{h}=-21.9944$ while $2T=12.5698$. Therefore $-3\pi-\int_{\tilde{\Omega}} \Delta \tilde{h}=12.5698$ as predicted by  Theorem \ref{wi} when the orbit goes counterclockwise  around the Sun more than once. Recall that the function $\tilde{h}=\ln (f\circ\phi_3)$. We compute the function $\beta $ by first finding the function $\gamma(t)$ and $q(t)>0$ sucht that $\alpha_1(t)+\mu=q(t) \cos(\gamma(t))$ and $\alpha_2(t)=q(t) \sin(\gamma(t))$. Once we have $q(t)$ and $\gamma(t)$, we obtain that 

$$\beta(t)=\left(\, -\mu+q(t)^\frac{1}{3}\cos\left(\frac{\gamma(t)}{3}\right),q(t)^\frac{1}{3}\sin\left(\frac{\gamma(t)}{3}\right) \, \right)$$

%New section Applications
\section{Applications.} The main theorems from this paper provide estimates for the period of the periodic orbits and also allow us to provide necessary conditions for the orbit to be either clockwise or counterclockwise. Example \ref{ex1} showes an orbit near the Lagrangian point $L_4$ orbiting clockwise. The following theorem shows that this happens not only for the Sun-Jupiter system but for any other value of $\mu$.

\begin{thm}
For any $\mu$, there exists an $\epsilon$ such that every periodic solution $\alpha(t)=(y_1(t),y_2(t))$ of the differential equation (\ref{eq2}) with Jacobian integral different from $C_0=3-\mu+\mu^2$, contained in the Disk $D_\epsilon$ with center in $L_4=(\frac{1-\mu}{2},\frac{\sqrt{3}}{2})$ must orbit clockwise. The same result holds true for $L_5=(\frac{1-\mu}{2},-\frac{\sqrt{3}}{2})$
\end{thm}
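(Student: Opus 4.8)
The plan is to push everything through Lemma~\ref{le} and reduce it to a pointwise bound on $|\nabla f|$ near $L_4$. Let $\alpha\subset D_\epsilon$ be a regular, simple, closed, $T$-periodic solution, and let $\theta$ be its angle function from Lemma~\ref{le}, so that $\dot\alpha=f(\cos\theta,\sin\theta)$ with $f=\sqrt{2\omega-C}>0$ along $\alpha$. By the theorem of turning tangents the total variation $\int_0^T\dot\theta\,dt$ equals $+2\pi$ or $-2\pi$, and in the orientation conventions of Theorem~\ref{mtp1} the value $-2\pi$ is precisely the clockwise case. Since Lemma~\ref{le} gives $\dot\theta=-2+\bigl(\tfrac{\partial f}{\partial y_2}\cos\theta-\tfrac{\partial f}{\partial y_1}\sin\theta\bigr)=-2+\nabla f\cdot(-\sin\theta,\cos\theta)$, it is enough to exhibit $\epsilon>0$ such that $|\nabla f|<2$ at every point of every admissible orbit inside $D_\epsilon$: then $\dot\theta<0$ along $\alpha$, so $\int_0^T\dot\theta\,dt<0$, hence it equals $-2\pi$ and $\alpha$ is clockwise.

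The local facts about $L_4$ that I would use are: $\nabla\omega(L_4)=0$ (it is an equilibrium); $2\omega(L_4)=C_0$; and a direct computation (at $L_4$ one has $r_1=r_2=1$) gives
\[
\Hess\omega(L_4)=\begin{pmatrix} 3/4 & \tfrac{3\sqrt3}{4}(1-2\mu)\\[2pt] \tfrac{3\sqrt3}{4}(1-2\mu) & 9/4 \end{pmatrix},\qquad \text{eigenvalues } \lambda_\pm=\tfrac32\pm\tfrac14\sqrt{\,9+27(1-2\mu)^2\,}.
\]
For $0<\mu\le\tfrac12$ one has $0<\lambda_-\le\tfrac34$ and $\tfrac94\le\lambda_+<3$; thus $\Hess\omega(L_4)$ is positive definite, i.e.\ $L_4$ is a nondegenerate local minimum of $\omega$, and --- this is the decisive inequality --- $\lambda_{\max}:=\lambda_+<3<4$. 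Choose $\epsilon$ small enough that $D_\epsilon$ omits $(-\mu,0)$ and $(1-\mu,0)$ and that $\omega(y)\ge\omega(L_4)+c|y-L_4|^2$ on $D_\epsilon$ for some $c>0$; then $\min_{D_\epsilon}2\omega=2\omega(L_4)=C_0$, attained only at $L_4$.

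Let $\alpha\subset D_\epsilon$ be periodic with Jacobian integral $C\neq C_0$. A solution with integral $C$ lives in $U_C$, and since $\min_{D_\epsilon}2\omega=C_0$ we must have either $C<C_0$, in which case $D_\epsilon\subset\operatorname{int}U_C$, or $C>C_0$. If $C<C_0$ then $f$ is smooth and positive on all of $D_\epsilon$, and on $D_\epsilon$ Taylor's formula at $L_4$ (with remainders uniform in $C$), together with $|\Hess\omega(L_4)v|^2\le\lambda_{\max}\,v^{\!\top}\Hess\omega(L_4)v$, yields
\[
|\nabla f|^2=\frac{|\nabla\omega|^2}{2\omega-C}\le\frac{|\nabla\omega|^2}{2\bigl(\omega-\omega(L_4)\bigr)}\le\frac{\lambda_{\max}\,(y-L_4)^{\!\top}\Hess\omega(L_4)(y-L_4)+O(|y-L_4|^3)}{(y-L_4)^{\!\top}\Hess\omega(L_4)(y-L_4)+O(|y-L_4|^3)},
\]
and shrinking $\epsilon$ (now depending on $\mu$ through $\lambda_\pm$ and the third derivatives of $\omega$) makes the right-hand side $<4$ simultaneously for all $C<C_0$. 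Hence $|\nabla f|<2$ on $D_\epsilon$, $\dot\theta<0$ along $\alpha$, and $\alpha$ is clockwise. This already settles the orbits that librate around $L_4$, including Example~\ref{ex1}, whose Jacobian integral lies just below $C_0$.

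The case $C>C_0$ is where the difficulty concentrates. Now $D_\epsilon\not\subset U_C$: the set $\{2\omega<C\}$ has a small forbidden component $F\ni L_4$, on $\partial F$ one has $f=0$ with $\nabla\omega\neq0$, so $|\nabla f|$ is unbounded near $\partial F$ and the bound above collapses; moreover an orbit that encircles $F$ encloses points outside $U_C$, so Theorem~\ref{mtp1} cannot be applied to it directly. I would attack this by a blow-up: put $y=L_4+\epsilon v$ and $C=C_0+\epsilon^2\kappa$, so that as $\epsilon\to0$ equation~(\ref{eq2}) tends to its linearization at $L_4$ and $|\nabla f|^2\to|\Hess\omega(L_4)v|^2/\bigl(v^{\!\top}\Hess\omega(L_4)v-\kappa\bigr)$. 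For $\mu$ above Routh's critical value $\mu_R$, $L_4$ is a hyperbolic saddle-focus, so by Hartman--Grobman no periodic orbit lies in a small neighborhood and the claim is vacuous; for $\mu<\mu_R$, $L_4$ is a center $\times$ center equilibrium whose only small periodic orbits are the two Lyapunov families --- the long-period one oscillating along the shallow $\lambda_-$-direction --- and on these ellipses the limiting ratio stays strictly below $4$, again because $\lambda_{\max}<3$, so $\dot\theta<0$ persists for $\epsilon$ small and the orbits are clockwise. The main obstacle is making this perturbative step rigorous: uniform control of the $\epsilon\to0$ limit, comparison of the orbit's size with that of $F$, the borderline value $\mu=\mu_R$, exclusion of any further small periodic orbits, and the bookkeeping forced by the fact that Theorem~\ref{mtp1} is unavailable when $\Omega$ surrounds $F$.
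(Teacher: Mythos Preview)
Your treatment of the case $C<C_0$ is correct and follows a genuinely different route from the paper. The paper computes $\Delta\ln f(L_4)=3/(C_0-C)>0$, invokes continuity to make $\Delta\ln f>0$ on a small disk, and then appeals to Theorems~\ref{mtp1} and~\ref{wis}: a counterclockwise orbit would force $2T=-2n\pi-\int_\Omega\Delta\ln f<0$, which is absurd. You instead return to Lemma~\ref{le} and bound $|\nabla f|$ pointwise via the eigenvalues of $\Hess\omega(L_4)$. This buys two things. First, it bypasses the Stokes machinery entirely and yields the sharper conclusion $\dot\theta<0$ at every instant, not just on average. Second --- and this is the real gain --- because you replace $2\omega-C$ by the smaller, $C$-independent quantity $2\omega-C_0$, your estimate $|\nabla f|^2\le|\nabla\omega|^2/(2\omega-C_0)\to v^\top H^2v/v^\top Hv\le\lambda_+<3<4$ is uniform over all $C<C_0$, so a single $\epsilon$ works. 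By contrast the paper's disk where $\Delta\ln f>0$ visibly depends on $C$: to leading order $\Delta\ln f$ has the sign of $3(C_0-C)+v^\top(3H-2H^2)v$ with $H=\Hess\omega(L_4)$, and since $3H-2H^2$ is indefinite (its $\lambda_+$-eigenvalue $\lambda_+(3-2\lambda_+)$ is negative for $\lambda_+\ge 9/4$) that region shrinks as $C\uparrow C_0$.

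For $C>C_0$ the paper disposes of the case in one sentence: continuity gives a disk about $L_4$ on which $2\omega<C$, hence no orbit enters it and the claim is vacuous there. You are right that this disk depends on $C$ and collapses to $\{L_4\}$ as $C\downarrow C_0$, so it does not by itself produce a uniform $\epsilon$; an orbit with $C$ just above $C_0$ could sit inside any fixed $D_\epsilon$ while skirting the tiny forbidden oval, and then $\Omega\not\subset U_C$ blocks Theorem~\ref{mtp1} while $|\nabla f|\to\infty$ near the zero-velocity curve blocks your pointwise bound. Your blow-up sketch (Routh's value, Lyapunov families, Hartman--Grobman) is an honest attempt to close this, but as you yourself flag it is incomplete and imports far more than the paper ever uses. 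In summary: on the $C<C_0$ side your argument is more elementary and genuinely uniform where the paper's is not; on the $C>C_0$ side you correctly identify a uniformity gap that the paper glosses over, but you do not fill it either.
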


\begin{proof} A direct computation shows that

\begin{eqnarray}\label{value of h}
\Delta \ln f(\frac{1-\mu}{2},\frac{\sqrt{3}}{2})\, =\, \frac{3}{3+\mu ^2-\mu-C }
\end{eqnarray}

We point out that $C_0$ is a critical value of the function $2\omega$: we can easily check that  $2\omega(L_4)=C_0$, $\nabla \omega(L_4)=0$ and $L_4$ is a local minimum. If $C>C_0$ then by the continuity of $\omega$,  there exists a disc $D_\epsilon$ centered at $L_4$ such that $2\omega<C$. Since $|\dot{\alpha}(t)|^2=2 \omega(\alpha(t))-C$,  no point in $\alpha(t)$ can be in the disk $D_\epsilon$. Now, if $C<C_0$, then by Equation (\ref{value of h}) we have that $\Delta \ln f(L_4)>0$ and using the continuity of $\ln f$, we can find a  disk $D_\epsilon$ centered at $L_4$ such that $\int_{D_\epsilon}\Delta \ln f$ is positive. By Theorems \ref{mtp1} and \ref{wis}, we conclude that the periodic solution $\alpha$ must move clockwise. The same argument proves the result for $L_5$.
\end{proof}

%\section{Conclusions}

%%%%%%%%%%%%%%%%%%%%%%%%%%%%

%%%%%%%%%%%%%%%%%%%%%%%%%%%%%%%%%%%%%%%%%%%%
%%%%%%%%%%%%%%%%%%%%%%%%%%%%%%%%%%%%%%%%%%%%

\end{document}